\documentclass[12pt,reqno]{amsart} 

\usepackage[utf8]{inputenc}         
\usepackage[T1]{fontenc}            
\usepackage[english]{babel}         
\DeclareUnicodeCharacter{2460}{\textcircled{\scriptsize 1}}
\DeclareUnicodeCharacter{2461}{\textcircled{\scriptsize 2}}
\DeclareUnicodeCharacter{2462}{\textcircled{\scriptsize 3}}
\DeclareUnicodeCharacter{2463}{\textcircled{\scriptsize 4}}
\DeclareUnicodeCharacter{2464}{\textcircled{\scriptsize 5}}
\DeclareUnicodeCharacter{2465}{\textcircled{\scriptsize 6}}
\DeclareUnicodeCharacter{2466}{\textcircled{\scriptsize 7}}
\DeclareUnicodeCharacter{2467}{\textcircled{\scriptsize 8}}
\DeclareUnicodeCharacter{2468}{\textcircled{\scriptsize 9}}
\DeclareUnicodeCharacter{25A1}{\ensuremath{\square}}
\usepackage[margin=1in]{geometry}   

\usepackage{enumitem}   
\usepackage{parskip}    
\usepackage{mdframed}   
\usepackage{microtype}  

\usepackage{amsmath}     
\usepackage{amsthm}      
\usepackage{amssymb}     
\usepackage{amsfonts}    
\usepackage{thmtools}    

\usepackage{bm}
\usepackage{systeme}     
\usepackage{mathtools}   
\usepackage{mathdots}

\usepackage{graphicx}   
\usepackage{wrapfig}    
\usepackage{tikz}       
\usetikzlibrary{positioning}
\usepackage{tikz-cd}    
\usepackage{float}
\usepackage{booktabs}                   
\usepackage{array}                      

\usepackage[varqu,varl]{inconsolata} 
\usepackage{xcolor}                  
\usepackage{hyperref} 
\hypersetup{colorlinks=true, allcolors=blue} 
\usepackage{skull}
\usepackage{color}
\usepackage{soul}
\usepackage[normalem]{ulem}
\usepackage{cancel}
\usepackage{mathrsfs}

\usepackage{listings}                           

\usepackage{silence}  

\mdfsetup{skipabove=1em,skipbelow=0em} 

\mdfsetup{
	linewidth = 0.3mm,
	innertopmargin = 2mm,
	innerbottommargin = 3.5mm,
	innerleftmargin = 3mm,
	innerrightmargin = 3mm
} 

    {\begin{proof}[Solution%
        \if\relax\detokenize{#1}\relax\else\ (#1)\fi]}%
    {\end{proof}}

    {\end{proof}}

\setlistdepth{10}  

\setlist[itemize,1]{label=•}
\setlist[itemize,2]{label=-}
\setlist[itemize,3]{label=\raisebox{-.5ex}{*}}
\setlist[itemize,4]{label=•}
\setlist[itemize,5]{label=-}

\newtheorem{theorem}{Theorem}[section]

\newtheorem{corollary}[theorem]{Corollary}
\newtheorem{lemma}[theorem]{Lemma}
\newtheorem{proposition}[theorem]{Proposition}

\theoremstyle{definition}

\newtheorem*{definition*}{Definition}

\newtheoremstyle{example}{10pt}{10pt}{}{}{\bfseries}{.}{.5em}{}
\theoremstyle{example}

\theoremstyle{remark}

\def\todo#1{\relax}

\definecolor{commentgreen}{HTML}{119F57}

\newlist{steps}{enumerate}{9}
\setlist[steps]{before=\upshape\normalfont,font=\ttfamily} %
\setlist[steps,1]{leftmargin=*,labelsep=1ex,label={\arabic*.},ref={\arabic*}} %
\setlist[steps,2]{leftmargin=*,labelsep=1ex,label={\Alph*.},ref={\thestepsi.\Alph*}} %
\setlist[steps,3]{leftmargin=*,labelsep=1ex,label={\arabic*.},
  ref={\thestepsii.\arabic*}} %
\setlist[steps,4]{leftmargin=*,labelsep=1ex,label={\alph*.},
  ref={\thestepsiii.\alph*}} %
\setlist[steps,5]{leftmargin=*,labelsep=1ex,label={\arabic*.},
  ref={\thestepsiv.\arabic*}} %
\setlist[steps,6]{leftmargin=*,labelsep=1ex,label={\Alph*.},
  ref={\thestepsv.\Alph*}} %
\setlist[steps,7]{leftmargin=*,labelsep=1ex,label={\arabic*.},
  ref={\thestepsvi.\roman*}} %
\setlist[steps,8]{leftmargin=*,labelsep=1ex,label={\alph*.},
  ref={\thestepsvii.\alph*}} %
\setlist[steps,9]{leftmargin=*,labelsep=1ex,label={\arabic*.},
  ref={\thestepsviii.\arabic*}} %
\newlist{stepitems}{itemize}{1} 
\setlist[stepitems]{before=\upshape\ttfamily,label=*,leftmargin=*,labelsep=1ex}%

\usepackage[normalem]{ulem}
  \let\oldin\in
  \let\oldcdot\cdot
  \renewcommand{\in}{\ensuremath{\oldin}}
  \renewcommand{\cdot}{\ensuremath{\oldcdot}}

  \newcommand{\tand}{\text{ and }}

  \newcommand{\Z}{\ensuremath{\bbZ}} 
  \newcommand{\Q}{\ensuremath{\bbQ}} 

  \DeclareMathOperator{\Gal}{Gal}     

  \renewcommand{\it}[1]{\emph{#1}}      

  \newcommand{\order}[1]{\ensuremath{\left\vert #1 \right\vert}} 
  \renewcommand{\index}[2]{\ensuremath{[ #1 : #2 ]}}        

  \newcommand{\divs}{\ensuremath{\mathrel{\vert}}}    

  \newcommand{\sgp}{\ensuremath{\mathrel{\leqslant}}} 
  \newcommand{\ogp}{\ensuremath{\mathrel{\geqslant}}} 

  \newcommand{\qg}[2]{\ensuremath{#1/#2}} 

\newcommand{\bbQ}{\mathbb{Q}}

  \newcommand{\bbZ}{\mathbb{Z}}

  \newcommand{\mcR}{\mathcal{R}}

\DeclareMathOperator{\rank}{rank}
\newcommand\blfootnote[1]{%
  \begingroup
  \renewcommand\thefootnote{}\footnote{#1}%
  \addtocounter{footnote}{-1}%
  \endgroup
}

\title{On a Combinatorial Characterization of Abelian Group Actions}

\author{Carson Gull}
\address[Carson Gull]{Department of Mathematics, Purdue University, 150 North University Street, West Lafayette, Indiana 47907-2067}
\email{cgull4@wgu.edu}

\author{Arthur Illidge}
\address[Arthur Illidge]{Department of Mathematics, The University of Chicago, 5734 S. University Avenue, Chicago, IL 60637}
\email{aillidge@uchicago.edu}

\author{Benjamin Mudrak}
\address[Benjamin Mudrak]{Department of Mathematics, Purdue University, 150 North University Street, West Lafayette, Indiana 47907-2067}
\email{bmudrak@purdue.edu}

\author{Joshua Park}
\address[Joshua Park]{Department of Mathematics, Purdue University, 150 North University Street, West Lafayette, Indiana 47907-2067}
\email{wljoshpark@gmail.com}

\author{Shaver Phagan}
\address[Shaver Phagan]{Bolin Science Hall, Midwestern State University, Wichita Falls, TX 76308}
\email{shaver.phagan@msutexas.edu}

\begin{document}
\maketitle

\blfootnote{\it{Keywords:} $G$-set, finite abelian group, Burnside ring, permutation representation, orbit-stabilizer theorem, Dedekind zeta function}

\section*{Abstract}
Let $G$ be a finite abelian group of order $n$ and $X$ a finite $G$-set. We give a necessary and sufficient condition on \(G\) for any \(X\) to be determined up to isomorphism by certain combinatorial data. Specifically, if \(M_H^X\) is the number of \(H\)-orbits in \(X\), and \(a_d^X\) is the number of \(G\)-orbits in \(X\) of cardinality \(d\), we characterize when equalities $M_H^{X_1} = M_H^{X_2}$ and $a_d^{X_1}=a_d^{X_2}$ for all $H \leq G$ and $d|n$, respectively, force an isomorphism $X_1 \simeq X_2$. Our main result is that this occurs if and only if $G$ is cyclic or $G$ is the direct sum of a finite cyclic group and a cyclic group of prime order. We conclude with an arithmetic application to $L$-function factorization.

\section{Introduction}
The study of group actions on finite sets through the lens of combinatorics and representation theory has a rich history \cite{plesken_counting_1982, kerber_applied_1999, bartel_brauer_2015} and has found powerful applications in a variety of fields, especially number theory and geometry \cite{perlis_equation_1977, sunada_riemannian_1985, leininger_length_2007, prasad_refined_2017}. The map from the Burnside ring of \(G\) to the rational representation ring of \(G\)
\begin{equation}\label{eqn:bside}
f:\Omega G\rightarrow\mathcal{R}_\Q G:X\mapsto\Q[X], 
\end{equation}
which takes a finite \(G\)-set (up to isomorphism) to its associated rational permutation representation (up to isomorphism), is of special interest to this end. Indeed, the Gassmann triples used in \cite{perlis_equation_1977} and \cite{sunada_riemannian_1985} are Brauer relations in \(\ker(f)\) given by a difference of transitive \(G\)-sets.  On the other hand, \(f\) is injective when \(G\) is cyclic, and a combinatorial explanation of this injectivity has proven useful in making explicit certain previously non-constructive results in algebraic number theory.  To articulate this, we use quantities naturally associated to a finite group action.
\begin{definition*}
    Let \(G\) be a finite group of order \(n\), acting on a finite set \(X\).  Given a divisor \(d\) of \(n\), let \(a_d^X\) denote the number of orbits of cardinality \(d\). Given a subgroup \(H\) of \(G\), set \(M_H^X=|H\backslash X|\). 
\end{definition*}

We will often omit the superscript when there is only one \(G\)-set under consideration.

\begin{theorem}[\cite{phagan_constructive_2026}\label{thm:phagan} Theorem 2.3]
    If \(G\) is a finite cyclic group of order \(n\) and \(X\) a finite \(G\)-set, then
    \begin{equation}\label{eqn:phagan}
     a_m=\sum\limits_{d|\frac{n}{m}}\frac{\mu(d)}{\varphi(md)}\sum\limits_{\delta|md}\mu(\delta)M_{G^{\frac{md}{\delta}}}, 
    \end{equation}
    where \(\varphi\) is Euler's totient, \(\mu\) is the M\"obius function, and \(G^d\) is the index \(d\) subgroup of \(G\).
\end{theorem}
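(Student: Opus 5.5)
Since both sides of \eqref{eqn:phagan} are additive under disjoint union of \(G\)-sets, it suffices to express each \(M_{G^k}\) in terms of the \(a_d\) and then invert. Write \(G=\Z/n\). The subgroups of \(G\) are exactly the \(G^k\) for \(k\mid n\), where \(G^k\) has index \(k\) and order \(n/k\).

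First I compute \(M_{G^k}\) on a single transitive orbit \(G/K\) of size \(d\), so that \(K=G^d\). Because \(G\) is abelian, \(G^k\) acts on \(G/K\) through the image \(G^kK/K\). Every \(G^k\)-orbit therefore has size \(|G^kK/K|\), and the number of orbits is \([G:G^kK]\). In a cyclic group, \(G^kG^d=G^{\gcd(k,d)}\). Hence a \(G\)-orbit of size \(d\) splits into \(\gcd(k,d)\) orbits under \(G^k\). Summing over all orbits gives
\begin{equation*}
M_{G^k}=\sum_{d\mid n}\gcd(k,d)\,a_d .
\end{equation*}

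Next I invert this relation. Using \(\gcd(k,d)=\sum_{e\mid \gcd(k,d)}\varphi(e)\), I get
\begin{equation*}
M_{G^k}=\sum_{e\mid k}\varphi(e)\,B_e, \qquad \text{where } B_e=\sum_{e\mid d\mid n}a_d .
\end{equation*}
Möbius inversion on the divisor lattice of \(n\) then gives \(\varphi(k)B_k=\sum_{\delta\mid k}\mu(\delta)M_{G^{k/\delta}}\). A second Möbius inversion, on the upward sums \(B_e\), gives
\begin{equation*}
a_m=\sum_{d\mid n/m}\mu(d)\,B_{md}.
\end{equation*}
Substituting the first inversion with \(k=md\) into the second yields exactly \eqref{eqn:phagan}.

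The main thing to get right is the orbit count in the first step, namely the identification \(G^kG^d=G^{\gcd(k,d)}\) and the resulting count of \(\gcd(k,d)\) orbits. I would check this directly in \(\Z/n\), where \(G^k=k\Z/n\Z\) and \(k\Z+d\Z=\gcd(k,d)\Z\). After that, the rest is two routine Möbius inversions. The only care needed is the direction of each inversion: the first is a downward sum over divisors of \(k\), and the second is an upward sum over multiples of \(m\) dividing \(n\).
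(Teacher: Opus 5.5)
Your proof is correct and complete. The single-orbit count is right: $G/G^d$ splits into $[G:G^kG^d]=\gcd(k,d)$ orbits of $G^k$, so $M_{G^k}=\sum_{d\mid n}\gcd(k,d)\,a_d$. The identity $M_{G^k}=\sum_{e\mid k}\varphi(e)B_e$ then follows from $\gcd(k,d)=\sum_{e\mid \gcd(k,d)}\varphi(e)$. Both M\"obius inversions are applied in the correct direction: the first runs downward over divisors of $k$, and the second runs upward over multiples of $m$ dividing $n$. Substituting $k=md$ gives exactly \eqref{eqn:phagan}.

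The paper does not prove this statement; it quotes it from \cite{phagan_constructive_2026}. So there is no in-paper argument to compare yours against, and your elementary derivation stands as a self-contained proof.

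Your intermediate quantity $B_e$ has a representation-theoretic meaning that ties your proof to how the paper uses the result. $B_e$ is the multiplicity in $\Q[X]$ of the irreducible rational representation of $G$ that factors through characters of exact order $e$. That representation has dimension $\varphi(e)$ and kernel $G^e$. Since $M_{G^k}=\dim \Q[X]^{G^k}$, your identity $M_{G^k}=\sum_{e\mid k}\varphi(e)B_e$ is just the isotypic decomposition of $\Q[X]^{G^k}$. Read this way, your first inversion recovers the multiplicities of $\Q[X]$ from $(M_H)$, and your second recovers $(a_m)$ from those multiplicities. This is exactly the injectivity of $f$ for cyclic $G$ that the paper draws from the theorem, and it is the form in which the theorem is used in the proof of Theorem \ref{thm:final-condition}.
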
  

 Theorem \ref{thm:phagan} implies \(f\) is injective when \(G\) is cyclic.  
 Indeed, \(f(X)\) determines the right-hand side of Equation \eqref{eqn:phagan}, and the left-hand side of Equation \eqref{eqn:phagan} determines \(X\).  
 Furthermore, by using Theorem \ref{thm:phagan} and some Galois theory, one obtains a completely algebraic explanation of the strong multiplicity one property of the Dedekind zeta function.  
 This had previously been understood analytically through the functional equation (cf. \cite{perlis_equation_1977} Theorem 1, \cite{phagan_constructive_2026} Theorem 3.2).

It is natural to wonder how Theorem \ref{thm:phagan} might generalize to arbitrary finite abelian groups, and what applications such a generalization might have.  Specifically, for which finite abelian groups \(G\) do the quantities \(M_H\) and \(a_d\) suffice as complete invariants for \(G\)-sets?  We give a complete answer to this question with the main theorem of this note, and we give an arithmetic application in Section \ref{sec:conclusions}.
\begin{theorem}\label{thm:main}
    Knowledge of \(\left(M_H^X\right)\) and \(\left(a_d^X\right)\) is sufficient to determine any \(G\)-set $X$ up to isomorphism if and only if 
    \begin{enumerate}
        \item $G$ is cyclic; or
        \item there is a positive integer \(n\) and a prime \(p\) such that $G\simeq C_{n}\oplus C_{p}$,
    \end{enumerate}
    where \(C_n\) is the cyclic group of order \(n\).
\end{theorem}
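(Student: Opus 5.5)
The plan is to turn the statement into linear algebra over \(\Q\). Since \(G\) is abelian, every finite \(G\)-set is \(X\simeq\bigsqcup_{K\sgp G}c_K\,G/K\) with \(c_K\in\bbZ_{\ge0}\). The invariants are linear in \((c_K)\):
\[
M_H^X=\sum_{K}c_K\,[G:HK],\qquad a_d^X=\sum_{[G:K]=d}c_K .
\]
Two \(G\)-sets with equal data differ by an integer vector in the common kernel \(\mathcal K\) of these functionals. Conversely, any nonzero rational vector in \(\mathcal K\) can be scaled to an integer vector. Splitting that vector into its positive and negative parts gives two non-isomorphic \(G\)-sets with identical data. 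Hence the data are complete invariants if and only if \(\mathcal K=0\).

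The \(M_H\) span the marks of the permutation characters, so their common kernel is \(\ker(f)\) from \eqref{eqn:bside}. For abelian \(G\) we have \(\Q[G/K]\simeq\bigoplus_{L\ogp K,\ G/L\text{ cyclic}}V_L\), where \(V_L\) is the rational irreducible attached to the cyclic quotient \(G/L\). Thus \(\mathcal K\) consists of the vectors \(c\) satisfying two families of conditions:
\[
\sum_{K\sgp L}c_K=0 \ \text{ for every } L \text{ with } G/L \text{ cyclic},\qquad \sum_{[G:K]=d}c_K=0 \ \text{ for every } d\divs n .
\]

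\textbf{Necessity.} We use two operations that preserve equality of the data. Inflation along \(G\to G/N\) preserves it, since \(M_H=M_{HN/N}\) and orbit sizes are unchanged. Induction from a subgroup preserves \(\Q[X]\) and multiplies every orbit size by the index. The structure theorem shows that if \(G\) is neither cyclic nor of the form \(C_n\oplus C_p\), then \(G\) has a quotient isomorphic to \(C_p^3\), \(C_{p^2}^2\), or \(C_{pq}^2\) with \(p\ne q\) prime. By inflation, it suffices to handle these three groups.

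The basic Brauer relation in \(C_p^2\) is
\[
G/1+p\,G/G-\sum_{L}G/L ,
\]
where \(L\) runs over the \(p+1\) lines. It lies in \(\ker f\), and it has orbit-size profile \(p^2\) once, \(1\) with coefficient \(p\), and \(p\) with coefficient \(-(p+1)\). The trick is always to take two copies of this relation, produced in two different ways, that have the same orbit-size profile, and subtract them.
\begin{itemize}
\item For \(C_p^3\), inflate the relation from \(G/L_1\) and from \(G/L_2\) for distinct lines \(L_1,L_2\).
\item For \(C_{p^2}^2\), take two distinct index-\(p\) subgroups \(H_1,H_2\simeq C_{p^2}\oplus C_p\). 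Inflate the relation from \(H_i/pH_i\simeq C_p^2\) and induce it to \(G\).
\item For \(C_{pq}^2\), do the same with two distinct index-\(q\) subgroups \(H_i\simeq C_p^2\oplus C_q\), inflating from \(H_i/(\text{Sylow } q)\).
\end{itemize}
In each case the difference has zero \(a_d\) and lies in \(\ker f\). One checks it is nonzero because the \(G/K\) terms that occur are different subgroups \(K\).

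\textbf{Sufficiency.} The cyclic case follows from Theorem \ref{thm:phagan}, and indeed from \(\ker f=0\). For \(G=C_n\oplus C_p\), I would show \(\mathcal K=0\) by a dimension count plus an explicit description of \(\ker f\). First write \(n=p^km\) with \(p\nmid m\); then \(G\simeq C_m\oplus(C_{p^k}\oplus C_p)\). Every subgroup \(K\) splits uniquely as \(K_{p'}\oplus K_p\), and a quotient is cyclic if and only if its \(p\)-part is cyclic. This reduces the analysis to the \(p\)-part \(C_{p^k}\oplus C_p\), twisted by the divisor lattice of \(m\).

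In \(P=C_{p^k}\oplus C_p\), the noncyclic-quotient subgroups are exactly those contained in the unique maximal such subgroup pattern. I would verify that \(\ker f\) is spanned by relations \(\rho_{K_0}\), one for each subgroup \(K_0\) with \(G/K_0\supseteq C_p^2\) minimal in the appropriate sense: namely \(\rho_{K_0}\) is the basic relation inflated from \(G/K_0\simeq C_p^2\times(\text{cyclic})\). The key structural fact is that these \(K_0\) occur with pairwise distinct indices. This holds because in \(C_{p^k}\oplus C_p\) there is a unique subgroup of each index giving a quotient containing \(C_p^2\), namely \(p^jP\).

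Given that fact, the \(a_d\) conditions can be solved triangularly. Order the \(K_0\) by index. Each \(\rho_{K_0}\) contributes \(+1\) to \(a_{[G:K_0]}\), and its other terms have strictly larger index or come with the cancelling profile. Hence all coefficients vanish, and \(\mathcal K=0\).

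The main obstacle is this sufficiency step: proving that \(\ker f\) really is spanned by such inflated relations, with one per index, and that the resulting matrix against the \(a_d\) is triangular with nonzero diagonal. I would first attempt it by comparing dimensions. The number of subgroups minus the number of cyclic-quotient subgroups should equal the number of subgroups \(K\) with \(G/K\) noncyclic. For \(C_n\oplus C_p\) that last set should be \(\{p^jP\oplus C\}\) for the appropriate cyclic pieces \(C\). This would show the \(\rho_{K_0}\) form a basis of \(\ker f\).
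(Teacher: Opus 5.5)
Your proposal is correct in substance, but it proves both directions by a different route from the paper.

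\textbf{Necessity.} The paper proves the general formula $\rank(\ker f\cap\ker g)=\sum_d\max\{c_d(G)-1,0\}$ (Theorem~\ref{thm:kerrank}) and combines it with Proposition~\ref{prop:noncyclic}. You instead use the invariant-factor trichotomy and inflation to reduce to the minimal quotients $C_p^3$, $C_{p^2}^2$, $C_{pq}^2$. In each case you exhibit an explicit nonzero element of $\ker f\cap\ker g$ as the difference of two lifted $C_p\oplus C_p$ relations with the same orbit-size profile. Your building blocks are exactly the paper's $\theta$'s. Your route is shorter and fully explicit, while the paper's gives the exact rank of the obstruction for every $G$.

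\textbf{Sufficiency.} The paper's Theorem~\ref{thm:final-condition} is a constructive reverse induction. It recovers $\alpha_K$ from $|X^K|$ (via Theorem~\ref{thm:phagan}) when $K$ is cyclic, and from $a_{[G:K]}$ when $K$ is the unique non-cyclic subgroup of its order. You work dually, with the subgroups $K_0$ whose quotient is non-cyclic. By duality, ``at most one such $K_0$ per index'' is equivalent to the paper's ``at most one non-cyclic subgroup per order.'' You then build a basis of $\ker f\otimes\Q$ indexed by these $K_0$ and make $g$ triangular on it. This avoids Theorem~\ref{thm:phagan} entirely, but it only proves injectivity rather than giving a recovery algorithm. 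In effect it is the argument of Theorem~\ref{thm:kerrank}, dualized and specialized to $c_d(G)\le 1$.

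Two details of your sufficiency sketch need repair.

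First, $\rho_{K_0}$ is misdescribed. For $K_0=C\oplus p^jP$ with $j\ge2$, one has $G/K_0\simeq C_{m'p^j}\oplus C_p$, which is not $C_p^2\times(\text{cyclic})$. Merely inflating the basic relation from a $C_p^2$ quotient gives a relation whose leading term is not $G/K_0$. The correct construction is as follows. Let $E\supseteq K_0$ be such that $E/K_0$ is the $p$-torsion subgroup of $G/K_0$, which is isomorphic to $C_p^2$. Induce the basic relation from $E/K_0$ up to $G/K_0$, then inflate to $G$. This gives $\rho_{K_0}=G/K_0-\sum_{K_0<L<E}G/L+p\,G/E$, which is the paper's $\theta$ with $N=K_0$ and $H=E$. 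Its non-leading terms have strictly \emph{smaller} index than $[G:K_0]$, not larger as you wrote; triangularity is unaffected.

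Second, the spanning claim needs $\dim(\ker f\otimes\Q)=\#\{K: G/K\text{ non-cyclic}\}$. That is, the equations indexed by the $L$ with $G/L$ cyclic must be independent. This holds because their coefficient matrix, restricted to the columns of cyclic-quotient subgroups, is the unitriangular inclusion matrix.

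With these fixes, the $\rho_{K_0}$ are independent by their leading terms, so they form a basis of $\ker f\otimes\Q$. The pairwise distinct indices $[G:K_0]$ then make $g|_{\ker f}$ injective, so $\ker f\cap\ker g=0$.
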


The proof proceeds in three steps.  First, in Section \ref{sec:structure}, we identify a class of finite abelian groups \(G\) whose \(G\)-sets admit the desired combinatorial characterization. Then, in Section \ref{sec:obstruction}, we equate the uniformity of our characterization to the triviality of a certain kernel.  The rank of this kernel is computed in Theorem \ref{thm:kerrank}, and as a corollary we find that all finite abelian groups \(G\), besides those identified in Section \ref{sec:structure}, possess \(G\)-sets, which do not admit our desired combinatorial characterization.  We suspect Theorem \ref{thm:kerrank} may be of independent algebro-combinatorial interest.
We conclude in Section \ref{sec:conclusions} by applying Theorem \ref{thm:main} to study a rigidity property for certain factorizations of Artin \(L\)-functions.

\section*{Notation and Conventions}

The Burnside ring $\Omega G$ of a finite group $G$ is the free abelian group generated by isomorphism classes of finite $G$-sets, modulo the relations $X_1+X_2 = X_1\sqcup X_2$, with multiplication given by the Cartesian product $X_1\cdot X_2 = X_1\times X_2$.
It is well known that the stabilizers of any two elements in the same transitive $G$-set are conjugate subgroups. 
All groups considered in this note are abelian, so transitive $G$-sets $\qg{G}{H}$ and $\qg{G}{K}$ are isomorphic if and only if $H=K$.
In particular, the map $\qg{G}{H} \mapsto H$, from isomorphism classes of transitive $G$-sets to subgroups of $G$, is a bijection.  We follow the convention in \cite{bartel_brauer_2015} and use this bijection to write an element $X\in \Omega G$ as a formal linear combination 
$$ X=\sum_{H\sgp G}\alpha_H H, $$
where $\alpha_H$ counts the number of $G$-orbits in $X$ isomorphic to $G/H$, when \(X\) is a \(G\)-set.
Characterizing a \(G\)-set $X$ therefore reduces to finding $\alpha_H$ for each $H\sgp G$.

We make use of the number-theoretic function \(\Omega(n)\), which counts prime divisors of $n$ with multiplicity, and the $p$-adic valuation function $\nu_p(n)$, both over the domain \(\mathbb{N}_+\). 
Whether \(\Omega\) denotes the Burnside ring $\Omega G$ or the function $\Omega(n)$ should be clear from context. 
Finally, for a given $G$-set $X$, we denote the family of all $M_H^X$ for subgroups $H$ of $G$ as $\left(M_H^X\right)$ and the family of all $a_d^X$ for $d$ dividing the order of $G$ as $\left(a_d^X\right)$.

\section*{Acknowledgments}  
This work began as a PXML project at Purdue University.  
The authors would like to thank Thomas Sinclair for his support and for organizing the PXML program.  The authors would also like to thank Ilya Shkredov for his thoughtful feedback on this work.

\section{A Structure Admitting the Combinatorial Characterization}\label{sec:structure}

In Theorem \ref{thm:final-condition}, we find a condition on the subgroups of $G$, which guarantees all \(G\)-sets admit our desired combinatorial characterization.  
This condition greatly restricts the structure of \(G\) (see Proposition \ref{prop:noncyclic}).

\begin{theorem} \label{thm:final-condition}
    Let $G$ be a finite abelian group such that, for each $d$ dividing $\order{G}$, $G$ contains at most one non-cyclic subgroup of order $d$. Then finite $G$-sets are uniquely determined up to isomorphism by $\left(M_H\right)$ and $\left(a_d\right)$.
\end{theorem}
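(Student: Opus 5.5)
We have to recover the multiplicities $\alpha_H$, where $X=\sum_{H}\alpha_H H$, from the data $(M_H)$ and $(a_d)$. The plan is to separate the cyclic subgroups from the non-cyclic ones. The $M$-data determines the cyclic multiplicities completely; the orbit counts $a_d$ then pin down the rest, because the hypothesis leaves only one unknown per order.

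\textbf{The $M$-data as linear functionals.} For $K\le G$, the number of $K$-orbits on a transitive $G/H$ (with $G$ abelian) is $|G/HK| = |G|\,|H\cap K|/(|H||K|)$. Hence
$$M_K=\sum_H \alpha_H\,\frac{|G|}{|HK|}.$$
Equivalently, $M_K=\langle \mathbb{Q}[X],\mathbb{Q}[G/K]\rangle$, so the $M$-data is exactly the data of the permutation character. Indeed, $M_K=\frac1{|K|}\sum_{k\in K}|X^k|$, and the fixed-point counts $|X^g|$ are recovered by Möbius inversion over the lattice of cyclic subgroups.

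\textbf{Step 1: recover the cyclic multiplicities.} I would use the number of elements of $X$ whose stabilizer is exactly $C$, for each cyclic $C$. I would try to show that the quantities $\sum_{H\supseteq C}\alpha_H|G/H|=|X^C|$, for $C$ cyclic, are known. The fixed points determine the marks $\phi_C(X)$ for all cyclic $C$.

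Artin's induction theorem says the rational character data is equivalent to the marks on cyclic subgroups. So from $(M_H)$ we know $\phi_C(X)$ for every cyclic $C$, but not the marks at non-cyclic subgroups.

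The marks determine $X$ by triangularity: $\phi_C(X)=\sum_{H\ge C}\alpha_H|G:H|$. The unknowns are the values $\phi_N(X)$ for $N$ non-cyclic. Equivalently, I would note that the kernel of $f$ in \eqref{eqn:bside} restricted to these coordinates is parametrized by the non-cyclic marks.

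\textbf{Step 2: use $a_d$ for the non-cyclic part.} We know $a_d=\sum_{|G:H|=d}\alpha_H$. For each order $e$, split this sum into the cyclic subgroups of order $e$ and at most one non-cyclic subgroup $N_e$. I would proceed by downward induction on $|H|$, starting from $H=G$.

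Suppose all $\alpha_{H'}$ with $|H'|>|H|$ are known. If $H$ is cyclic, then $\alpha_H$ is obtained from the known mark $\phi_H(X)=\alpha_H|G:H|+\sum_{H'>H}\alpha_{H'}|G:H'|$. If $H=N_e$ is non-cyclic, then the cyclic subgroups of the same order $e$ are handled by the previous case, since only strictly larger subgroups enter their marks. Then
$$\alpha_{N_e}=a_{|G|/e}-\sum_{C\text{ cyclic},\,|C|=e}\alpha_C.$$
Processing all subgroups of order $e$ together, cyclic ones first, makes the induction go through. This recovers every $\alpha_H$, and hence $X$.

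\textbf{Main obstacle.} The delicate point is Step 1: justifying rigorously that the $M$-data determines the marks at every cyclic subgroup. I would argue this directly. $M_K\cdot|K|=\sum_{k\in K}|X^{\langle k\rangle}|$, and Möbius inversion on the poset of subgroups, applied to $F(K)=|K|M_K$, gives $\sum_{\langle k\rangle=K}|X^k|$ for cyclic $K$. This equals $\varphi(|K|)\,|X^K|$, which is nonzero-weighted, so $|X^K|=\phi_K(X)$ is recovered. One must check that only subgroup sums, and not non-cyclic marks, enter, so that the inversion is legitimate; this holds because every element generates a cyclic subgroup.
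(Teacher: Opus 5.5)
Your proposal is correct and follows essentially the paper's argument. It uses downward induction over subgroups (you order by $|H|$, the paper by $\Omega(|H|)$, which makes no difference), solves for $\alpha_C$ with $C$ cyclic from $|X^C|=\sum_{H\ge C}\alpha_H[G:H]$, and solves for the unique non-cyclic subgroup of each order $e$ from $a_{|G|/e}$ once the cyclic subgroups of order $e$ are done. The only real difference is how you know $|X^C|$: you derive it directly from Burnside's lemma, $|C|\,M_C=\sum_{D\le C}\varphi(|D|)\,|X^D|$, plus M\"obius inversion over the subgroups of $C$, whereas the paper applies Theorem~\ref{thm:phagan} to the restricted $C$-action (your triangular relation also correctly keeps the factor $[G:C]$ on $\alpha_C$, which the paper's display drops).
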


\begin{proof}
Let $X$ be a finite $G$-set.
We proceed by reverse induction on \(m\), where \(\alpha_K\) is known if \(\Omega(\order{K})\geq m\).

If $\Omega(\order{K}) = \Omega(\order{G})$, then $K = G$, so that 
$$ \alpha_K=\alpha_G = a_1. $$  
Let $K$ be a subgroup with $\Omega(\order{K})=m-1$, and assume by induction that $\alpha_{K'}$ is known for any subgroup \(K'\) with \(\Omega(|K'|)\geq m\). 
We want to find $\alpha_K$.

The set \(X^K\) of fixed points in \(X\) under the induced \(K\)-action is given by 
$$ X^K = \sum_{K' \ogp K}\alpha_{K'}K', $$ 
so 
$$ \order{X^K}=\sum_{K' \ogp K}\alpha_{K'}[G:K']=\alpha_K+\sum_{K' > K}\alpha_{K'}[G:K']. $$
By hypothesis, we know \(\alpha_{K'}\) for each subgroup \(K'\) of \(G\), such that \(K< K'\).  
Furthermore, we know \(|X^K|\) for cyclic $K$, since knowledge of \(M_H\) for each subgroup \(H\) of \(K\) allows us to completely recover the \(K\)-set structure on \(X\), by Theorem \ref{thm:phagan}. 
Hence for \(K\) a cyclic group with \(\Omega(\order{K})=m-1\), the induction hypothesis allows us to compute \(\alpha_K\).  
Now, if \(K\) is non-cyclic, then it is the unique non-cyclic \(G\)-subgroup of order \(\order{K}\), so that 
$$ a_{\order{G}/\order{K}} = \sum_{\substack{K' \sgp G\\ \order{K'} = \order{K}}}\alpha_{K'}=\alpha_K + \sum_{\substack{C \sgp G \text{ cyclic}\\ \order{C} = \order{K}}}\alpha_C, $$
and we can solve for $\alpha_K,$ since we know \(a_{\order{G}/\order{K}}\) and the \(\alpha_C\).
Hence $X$ is uniquely identifiable up to isomorphism.
\end{proof}

\begin{proposition} \label{prop:noncyclic}
A non-cyclic finite abelian group \(G\) is isomorphic to \(C_{n}\oplus C_{p}\) if and only if $G$ has at most one non-cyclic subgroup of order $d$ for each $d$ dividing the order of $G$.
\end{proposition}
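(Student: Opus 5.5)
We prove both directions by reducing to Sylow subgroups. Write $G=\bigoplus_p G_p$. A subgroup $H\sgp G$ splits as $H=\bigoplus_p H_p$ with $H_p=H\cap G_p$. Moreover, $H$ is cyclic if and only if every $H_p$ is cyclic. Hence the number of non-cyclic subgroups of order $d=\prod_p p^{e_p}$ is
\[
N(d)=\prod_p T_p(p^{e_p})-\prod_p C_p(p^{e_p}),
\]
where $T_p(q)$ and $C_p(q)$ denote the total number of subgroups and the number of cyclic subgroups of $G_p$ of order $q$, respectively.

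For the forward direction, suppose $G\simeq C_n\oplus C_p$ is non-cyclic, so $p\mid n$. Write $n=p^a m$ with $p\nmid m$ and $a\geq 1$. Then $G\simeq C_m\oplus (C_{p^a}\oplus C_p)$, and every Sylow subgroup other than $G_p$ is cyclic. A subgroup $H$ is therefore non-cyclic exactly when $H_p$ is non-cyclic, and its $p'$-part is forced once its order is fixed, since a cyclic group has a unique subgroup of each order. So it suffices to show that $C_{p^a}\oplus C_p$ has at most one non-cyclic subgroup of each order. Such a subgroup $K$ has rank $2$, so it contains the full $p$-torsion $\Omega_1=G[p]\simeq C_p\oplus C_p$. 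Then $K/\Omega_1$ is a subgroup of $G_p/\Omega_1\simeq C_{p^{a-1}}$, which is cyclic. Thus $K$ is determined by its order, and there is exactly one non-cyclic subgroup of each order $p^k$ with $k\geq 2$.

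For the converse, suppose $G$ is non-cyclic with at most one non-cyclic subgroup of each order. First we show that at most one Sylow subgroup is non-cyclic. If $G_p$ and $G_q$ were both non-cyclic with $p\neq q$, take $d=p\cdot q^2$. Then $G_q$ has a non-cyclic subgroup $C_q\oplus C_q$, and $G_p$ has $p+1\geq 2$ subgroups of order $p$. This produces at least two non-cyclic subgroups of order $pq^2$, a contradiction. So exactly one Sylow subgroup $G_p$ is non-cyclic, and $G_p$ inherits the hypothesis by the product formula above, taking $d$ a power of $p$.

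It remains to show that a non-cyclic abelian $p$-group with at most one non-cyclic subgroup of each order has rank exactly $2$, and is therefore $C_{p^a}\oplus C_{p^b}$ with $1\leq b\leq a$; and then that $b=1$. This step is the main obstacle, and we handle it by exhibiting two distinct non-cyclic subgroups of the same order.

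If the rank is at least $3$, then $G_p[p]\simeq C_p^r$ with $r\geq 3$ contains many copies of $C_p\oplus C_p$. For example, $C_p^3$ has $p^2+p+1>1$ subgroups of order $p^2$, and all of them are non-cyclic.

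If the rank is $2$ with $b\geq 2$, let $x,y$ be generators of $C_{p^a}\oplus C_{p^b}$ of orders $p^a$ and $p^b$. Compare the subgroups of order $p^3$
\[
\langle p^{a-2}x,\ p^{b-1}y\rangle \qquad\text{and}\qquad \langle p^{a-1}x,\ p^{b-2}y\rangle .
\]
The first is isomorphic to $C_{p^2}\oplus C_p$ and contains the element $p^{a-2}x$ of order $p^2$. The second is also of order $p^3$ and non-cyclic, but its elements of order $p^2$ have the form $p^{b-2}y\cdot(\text{unit})+(\ldots)$, so they differ from those of the first. Hence the two subgroups are distinct, which is a contradiction.

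Therefore $b=1$ and $G_p\simeq C_{p^a}\oplus C_p$. Since the other Sylow subgroups are cyclic of coprime orders, $G\simeq C_{n/p}\oplus C_p$, which has the required form.
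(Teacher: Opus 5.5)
Your proof is correct, but it is built on the primary decomposition, whereas the paper uses the invariant-factor decomposition.

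For the direction ``at most one non-cyclic subgroup of each order $\Rightarrow$ $G\simeq C_n\oplus C_p$'', the paper only observes that $C_q\oplus C_q\oplus C_q$ and $C_{qq'}\oplus C_{qq'}$ each have two non-cyclic subgroups of a common order. A non-cyclic $G$ contains one of these unless it has exactly two invariant factors, the smaller one prime. Your three excluded cases are exactly these configurations:
\begin{itemize}
\item two non-cyclic Sylow subgroups, witnessed at order $pq^2$, is $C_{qq'}\oplus C_{qq'}$ with $q\neq q'$;
\item $p$-rank at least $3$ is $C_q^3$;
\item rank $2$ with $b\ge 2$, witnessed at order $p^3$, is $C_{q^2}\oplus C_{q^2}$.
\end{itemize}
So the content is the same, but you write down the explicit pairs of subgroups that the paper leaves to the reader.

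For the other direction, the paper fixes $G=A\oplus B$ with $B\simeq C_p$. A non-cyclic $H$ must meet $B$, since otherwise it embeds via projection in the cyclic group $A$. Hence $H$ contains $B$, and $H=(H\cap A)\oplus B$ is determined by $|H|$. Your argument plays the same role: a non-cyclic subgroup must contain the socle $G[p]\simeq C_p\oplus C_p$, so it corresponds to a subgroup of a cyclic quotient. The paper's version is shorter and needs no Sylow reduction. Yours does not really need the reduction either, since $G/G[p]\simeq C_{n/p}$ is already cyclic.

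One small tightening: instead of comparing ``elements of order $p^2$'' loosely, note that the projections of $\langle p^{a-2}x,p^{b-1}y\rangle$ and $\langle p^{a-1}x,p^{b-2}y\rangle$ onto $\langle x\rangle$ have orders $p^2$ and $p$. That shows directly that the two subgroups are distinct.
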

\begin{proof}
Suppose $G$ has at most one non-cyclic subgroup of any given order.
Given (not necessarily distinct) primes \(q,q'\), observe that the groups \(C_{q}\oplus C_{q}\oplus C_{q}\) and \(C_{qq'}\oplus C_{qq'}\) both have at least two distinct non-cyclic subgroups sharing an order. In particular, \(G\) must have \(2\) invariant factors, and one of those invariant factors must be of prime order.  In other words, $G\simeq C_n\oplus C_p$ for some $n$ and $p$. 

For the converse, suppose \(G\) admits a decomposition \(G=A\oplus B\), with \(A\simeq C_{n}\) and \(B\simeq C_{p}\), and let $H$ be a non-cyclic subgroup of $G$ with order $d$. 
The intersection $H\cap B$ is non-trivial, otherwise $H$ would be cyclic, since \(H/(H\cap B)\) is isomorphic to a subgroup of \(A\).
It follows that $H$ contains $B$, and thus \(H = (H\cap A)\oplus B\).
Comparing orders yields $H\cap A = A_{d/p}$, the unique subgroup of \(A\) of order \(d/p\).
Notice $B$ and $A_{d/p}$ depend only on $d$, whence $H$ is the unique non-cyclic subgroup of order $d$.
\end{proof}

\begin{corollary}
    If \(G\simeq C_n\oplus C_p\), then every \(G\)-set \(X\) is uniquely determined up to isomorphism by $\left(M_H^X\right)$ and $\left(a_d^X\right)$.
\end{corollary}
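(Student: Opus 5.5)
The corollary should follow by chaining the two preceding results. Suppose \(G\simeq C_n\oplus C_p\). I will split on whether \(G\) is cyclic. This happens, for instance, when \(p\nmid n\), since then \(C_n\oplus C_p\simeq C_{np}\).

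\textbf{Cyclic case.} If \(G\) is cyclic, Theorem \ref{thm:phagan} applies directly. It expresses each \(a_m\) in terms of the \(M_H\). The \(a_m\) determine the multiplicities \(\alpha_H\), because a cyclic group has exactly one subgroup of each order, so \(\alpha_H=a_{[G:H]}\). Hence \(X\) is determined by \(\left(M_H^X\right)\) alone, and a fortiori by \(\left(M_H^X\right)\) together with \(\left(a_d^X\right)\). Alternatively, a cyclic group has no non-cyclic subgroups at all, so the hypothesis of Theorem \ref{thm:final-condition} holds vacuously.

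\textbf{Non-cyclic case.} Here the ``if'' direction of Proposition \ref{prop:noncyclic} shows that \(G\) has at most one non-cyclic subgroup of order \(d\) for each \(d\mid |G|\). The full hypothesis of Theorem \ref{thm:final-condition} is therefore satisfied. Applying that theorem, every finite \(G\)-set is uniquely determined up to isomorphism by \(\left(M_H\right)\) and \(\left(a_d\right)\). That is exactly the claim, since equal data for \(X_1\) and \(X_2\) recover the same coefficients \(\alpha_H\), and so \(X_1\simeq X_2\).

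\textbf{Main point to check.} There is essentially no obstacle, as this is a direct combination of the earlier results. The one thing to verify is that the hypotheses match exactly. Proposition \ref{prop:noncyclic} is stated only for non-cyclic \(G\), which is why the cyclic case is handled separately. Also, Theorem \ref{thm:final-condition} requires the at-most-one condition for every divisor \(d\), and that is precisely what the proposition supplies.
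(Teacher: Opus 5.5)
Your argument is correct and is essentially the paper's proof, which just combines Theorem \ref{thm:final-condition} with Proposition \ref{prop:noncyclic}. Treating the cyclic case separately, where the hypothesis of Theorem \ref{thm:final-condition} holds vacuously, makes explicit a point the paper glosses over, since the proposition is stated only for non-cyclic \(G\); one trivial slip is that the implication you use (\(G\simeq C_n\oplus C_p\) gives at most one non-cyclic subgroup of each order) is the ``only if'' direction of the proposition as phrased, i.e.\ the ``converse'' part of its proof, not the ``if'' direction.
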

\begin{proof}
    This follows immediately from Theorem \ref{thm:final-condition} and Proposition \ref{prop:noncyclic}.
\end{proof}

\section{An Obstruction to the Combinatorial Characterization}\label{sec:obstruction}

In Lemma \ref{lem:kertrivial}, we exhibit Brauer relations obstructing our combinatorial characterization of \(G\)-sets.  
This obstruction is naturally described by the kernels of the homomorphisms
$$ f:\Omega G \rightarrow \mcR_{\Q}(G): X\mapsto \Q[X]$$
and
$$ g:\Omega G \rightarrow \bigoplus_{d \divs \order G}\Z: [G/H] \mapsto e_{\index{G}{H}}, $$
where the basis vectors \(e_d\) of the target of \(g\) are indexed by the divisors \(d\) of \(\order{G}\).
In Corollary \ref{cor:obstruction}, we show that these Brauer relations are present for any finite abelian group outside the class studied in Section \ref{sec:structure}.

\begin{lemma} \label{lem:kertrivial}
    The isomorphism class of every finite $G$-set \(X\) is uniquely determined by $\left(M_H^X\right)$ and $\left(a_d^X\right)$ if and only if $\ker(f)\cap\ker(g) = 0$.
\end{lemma}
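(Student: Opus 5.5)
The plan is to show that the data $\left(M_H^X\right)$ and $\left(a_d^X\right)$ are exactly the images of $X$ under two additive maps out of $\Omega G$, one factoring through $f$ and one equal to $g$. Once that is in place, the lemma reduces to a statement about differences of $G$-sets lying in $\ker(f)\cap\ker(g)$.

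First, $g$ records the $a_d$. For a $G$-set $X=\sum_H \alpha_H H$ we have $g(X)=\sum_H \alpha_H e_{[G:H]}$. The coefficient of $e_d$ is the number of orbits of cardinality $d$, since $|G/H|=[G:H]$. Hence $g(X)=\sum_d a_d^X e_d$, and $g(X_1)=g(X_2)$ if and only if $a_d^{X_1}=a_d^{X_2}$ for all $d$.

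Next, we show that $f(X)$ determines, and is determined by, $\left(M_H^X\right)$. Since $M_H^X=|H\backslash X|=\dim_{\Q}\Q[X]^H=\langle \mathrm{Res}^G_H\Q[X],\mathbf 1\rangle_H$, the representation $\Q[X]$ determines every $M_H^X$. Conversely, we must show that $\left(M_H^X\right)$ determines $\Q[X]$. The character $\chi_X(g)=|X^g|$ is rational-valued and depends only on the cyclic subgroup $C=\langle g\rangle$. By Theorem \ref{thm:phagan} applied to the cyclic group $C$, the numbers $M_{C'}^X$ for $C'\le C$ determine the $C$-set structure of $X$, and hence $|X^C|=|X^g|$. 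Thus $\left(M_H^X\right)$ determines $\chi_X$, and therefore $\Q[X]$ up to isomorphism, since rational representations are determined by their characters. It follows that $f(X_1)=f(X_2)$ if and only if $M_H^{X_1}=M_H^{X_2}$ for all $H$.

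With these identifications, both directions are short. Suppose $\ker f\cap\ker g=0$, and suppose $X_1,X_2$ share all $M_H$ and $a_d$. Then $X_1-X_2\in\ker f\cap\ker g$, so $X_1=X_2$ in $\Omega G$. Since the transitive $G$-sets form a $\Z$-basis of $\Omega G$, the coefficients $\alpha_H$ agree, and so $X_1\simeq X_2$. Conversely, suppose $0\neq Z\in\ker f\cap\ker g$. Write $Z=\sum_H\alpha_H H$ and split it into positive and negative parts, $Z=X_1-X_2$ with $X_1=\sum_{\alpha_H>0}\alpha_H H$ and $X_2=\sum_{\alpha_H<0}(-\alpha_H)H$. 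These are genuine $G$-sets with no common orbit types, so they are non-isomorphic. By additivity of $f$ and $g$, they nevertheless share all $M_H$ and $a_d$, and the characterization fails.

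The main obstacle is the reverse direction of the second step: recovering $\Q[X]$ from the orbit counts $M_H$, which is where Theorem \ref{thm:phagan} and the character argument over cyclic subgroups enter. The remaining steps are formal.
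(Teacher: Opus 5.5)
Your proof is correct and follows the paper's route: identify $g(X)$ with $\left(a_d^X\right)$ and $f(X)$ with $\left(M_H^X\right)$, then observe that $X_1-X_2\in\ker(f)\cap\ker(g)$ exactly when the data agree. You also supply two details the paper leaves implicit: the converse that $\left(M_H^X\right)$ recovers $\Q[X]$, via Theorem \ref{thm:phagan} on cyclic subgroups and characters, and the splitting of a nonzero kernel element into positive and negative parts to produce genuine non-isomorphic $G$-sets.
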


\begin{proof}
    First, notice that for finite $G$-sets $X$ and $Y$, $f(X) = f(Y)$ if and only if $M_H^X = M_H^Y$ for every $H \sgp G$. 
    Moreover, for $X = \sum_{H \sgp G}\alpha_H^X H$, one has 
    $$ g(X) = \sum_{H\sgp G}\alpha_H^X e_{\index{G}{H}}. $$
    Recall that $\alpha_K$ counts the number of transitive $G$-sets in $X$ with stabilizer exactly $K$ for some $K\sgp G$.
    Then, an immediate consequence of the orbit-stabilizer theorem is that $a_d^X = \sum_{\index{G}{H}= d}\alpha_H^X$.
    That is, the total number of transitive $G$-sets in $X$ stabilized by a subgroup with index $d$ is exactly $a_d^X$.
    Thus $g(X) = g(Y)$ if and only if $a_d^X = a_d^Y$ for all $d$ dividing $\order{G}$. 
    Therefore, 
    $$ X-Y \in \ker(f)\cap \ker(g) $$
    if and only if
    $$ M_H^X=M_H^Y $$
    for every subgroup $H$ of $G$ and
    $$ a_d^X=a_d^Y $$
    for every divisor \(d\) of \(\order{G}\).
    Hence, every $G$-set is fully determined by $\left(M_H\right)$ and $\left(a_d\right)$ if and only if $\ker(f)\cap\ker(g) = 0$.
\end{proof}

We now provide a combinatorial formula for the rank of $\ker(f)\cap\ker(g)$. 

\begin{theorem} \label{thm:kerrank}
    Let $c_d(G)$ be the number of non-cyclic subgroups of $G$ of order $d$.
    Then the intersection of the kernels of $f$ and $g$ has rank
    $$\sum_{d| \order{G}}\max\{c_d(G) - 1,0\}. $$
\end{theorem}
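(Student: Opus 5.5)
We compute the rank of \(\ker(f)\cap\ker(g)\) by viewing \(\Omega G\) as a lattice with basis the subgroups \(H\sgp G\). We then show that \(\ker(f)\), restricted to the relevant coordinates, is exactly the span of differences between non-cyclic basis elements of equal order.

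\textbf{Step 1: the image of \(\ker(f)\).} For abelian \(G\), the classical result (Artin's induction theorem, or the fact that \(\Q[G/H]\) is determined by the marks on cyclic subgroups) gives the following: \(\Q[X]\cong\Q[Y]\) if and only if \(|X^C|=|Y^C|\) for every cyclic \(C\sgp G\). Hence
\[
\ker(f)=\Bigl\{\textstyle\sum_H \alpha_H H : \sum_{H\ogp C}\alpha_H[G:H]=0 \text{ for all cyclic } C\Bigr\}.
\]
Consequently \(\operatorname{rank}\ker(f)=\#\{\text{subgroups}\}-\#\{\text{cyclic subgroups}\}\). This holds because the mark homomorphism on all subgroups is injective (upper triangular with nonzero diagonal), so the rows indexed by cyclic \(C\) are linearly independent. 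Equivalently, the reverse induction in the proof of Theorem \ref{thm:final-condition} shows that the coefficients \(\alpha_C\) for cyclic \(C\) are uniquely determined by \(f(X)\) together with the \(\alpha_K\) for \(K\) non-cyclic. The non-cyclic coordinates, by contrast, are free.

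\textbf{Step 2: a parametrization.} The previous step yields an isomorphism \(\ker(f)\otimes\Q\to\Q^{\{\text{non-cyclic }K\}}\), given by projection onto the non-cyclic coordinates. It is injective because a kernel element with all non-cyclic coordinates zero must vanish on the cyclic coordinates by the triangular recursion. It is surjective over \(\Q\) because we can solve the recursion for the cyclic coordinates: for cyclic \(C\), take \(\alpha_C=-\sum_{H>C}\alpha_H[G:H]/[G:C]\), processed in decreasing order.

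\textbf{Step 3: intersecting with \(\ker(g)\).} Fix an element of \(\ker(f)\) and apply \(g\). Its \(e_d\)-coordinate is \(\sum_{[G:H]=d}\alpha_H\), which splits into the cyclic part and the non-cyclic part at order \(|G|/d\). The main obstacle is to show that the cyclic part already vanishes for every element of \(\ker(f)\). This amounts to the statement that \(f(X)\) determines \(a_m\) restricted to cyclic stabilizers, and that this restricted quantity is linear in \(X\).

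I would attack this by pairing the relation \(\sum_{H\ogp C}\alpha_H[G:H]=0\) with Möbius inversion on the lattice of cyclic subgroups, in the style of Theorem \ref{thm:phagan}. Concretely, I would sum the fixed-point relations over all cyclic \(C\) of a given order, weighted by \(\mu\)-coefficients. The goal is to show that the resulting linear functional equals \(\sum_{C\text{ cyclic},|C|=e}\alpha_C\) plus a combination of the non-cyclic counts \(\sum_{|K|=e}\alpha_K\). If this works, the condition \(g=0\) becomes, after a triangular change of variables, one linear condition per order \(e\) in which non-cyclic subgroups exist, namely that \(\sum_{K\text{ non-cyclic},|K|=e}\alpha_K\) vanish. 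This is precisely what Theorem \ref{thm:final-condition} used in the unique-non-cyclic case.

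The space cut out this way has dimension \(\sum_d(c_d-1)\) over orders with \(c_d\ge1\), which equals \(\sum_d\max\{c_d-1,0\}\). Since the kernel is a saturated sublattice of a free abelian group, its rank equals this \(\Q\)-dimension.

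If the exact cancellation in Step 3 fails, the fallback is a dimension count. The lower bound comes from explicit Brauer-type relations: for two non-cyclic \(K,K'\) of equal order, the element \(K-K'\) corrected by cyclic terms via Step 2 lies in \(\ker(f)\), and one then checks that its \(g\)-image vanishes. The upper bound comes from the independence of the \(g\)-functionals given by the non-cyclic sums.
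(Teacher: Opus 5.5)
There is a genuine gap, located at the step you yourself flagged as the main obstacle: the claim in Step 3 is false, so the argument cannot be completed along that route.

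In its first form (the cyclic part of $g$ vanishes on $\ker(f)$), the claim already fails for $G=C_p\oplus C_p$, since $1-\sum_{|C|=p}C+pG\in\ker(f)$ has $\alpha_1=1$.

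The weaker form you aim for says that on $\ker(f)$ the vector $g(x)$ is a function of the sums $N_e=\sum_{K\text{ non-cyclic},\,|K|=e}\alpha_K$. This fails for $G=C_4\oplus C_2\oplus C_2$. Let $z_K\in\ker(f)\otimes\Q$ be the element your Step 2 attaches to a single non-cyclic $K$. Running your recursion gives:
\begin{itemize}
\item $g(z_K)=\tfrac{3}{4}e_{16}-\tfrac{7}{4}e_8+e_2$ for the $2$-torsion subgroup $K=G[2]\simeq C_2^3$;
\item $g(z_{K'})=\tfrac{1}{4}e_{16}-\tfrac{1}{4}e_8-e_4+e_2$ for $K'\simeq C_4\oplus C_2$;
\item $g(z_L)=\tfrac{1}{2}e_{16}-\tfrac{3}{2}e_8+e_4$ for every Klein four-subgroup $L$.
\end{itemize}
Thus $g(z_K-z_{K'})=g(z_L)\neq 0$, although every $N_e(z_K-z_{K'})$ vanishes.

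Consequently your fallback element ``$K-K'$ corrected by cyclic terms'' is not in $\ker(g)$. Moreover, $\ker(f)\cap\ker(g)$ is not the subspace $\{N_e=0\}$: it contains $z_K-z_{K'}-z_L$, which has $N_4=-1$. The final count $15-3=12=\sum_d\max\{c_d-1,0\}$ is still correct here. But it holds only because of the linear dependence $g(z_{C_2^3})-g(z_{C_4\oplus C_2})=g(z_{C_2^2})$ among images of different isomorphism types, which no triangular change of variables in the $N_e$ can detect.

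What is really needed is $\rank g(\ker(f))=|D|$, where $D$ is the set of orders $d$ for which a non-cyclic subgroup of order $d$ exists. Rank--nullity for $g|_{\ker(f)}$, together with your correct Step 1 ($\rank\ker(f)=\sum_d c_d$), then finishes the proof.

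The paper proves the lower bound with lifted $C_p\oplus C_p$ relations $\theta_d$. Their images $e_{p^2[G:H]}-(p+1)e_{p[G:H]}+pe_{[G:H]}$ are triangular in $d$, hence independent.

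It proves the upper bound by exhibiting $\sum_d(c_d-1)$ independent elements $\theta_H-\theta_{H'_d}$ of $\ker(f)\cap\ker(g)$. The point is that $g(\theta_H)$ depends only on the prime $p$ and on $[G:H]$. So $H$ must be paired with a reference subgroup $H'_d$ that has a non-cyclic Sylow $p$-subgroup for every prime $p$ for which some subgroup of order $d$ does. The lower-order non-cyclic terms of $\theta_H$ and $\theta_{H'_d}$ then generally differ (three Klein subgroups versus one in the example above). That difference is exactly the correction your naive $K-K'$ is missing.
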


\begin{proof}
    Let $D$ be the set of divisors $d$ of $\order{G}$ such that there exists a non-cyclic subgroup of $G$ with order $d$, and let $n = |D|$. 
    We first show $\rank(g(\ker(f)) = n$. Take $d \in D$, and let $H_d \sgp G$ be non-cyclic of order $d$. 
    Since $H_d$ is non-cyclic, there exist a prime $p_d$ and a subgroup $N_d \sgp H_d$ such that $\qg{H_d}{N_d} \simeq C_{p_d}\oplus C_{p_d}$.
    In particular, $\qg{H_d}{N_d}$ has the following Brauer relation (see \cite{bartel_brauer_2015}, ex. 2.4): 
    $$ \theta_{\qg{H_d}{N_d}} = 1 - \sum_{\substack{C\sgp\qg{H_d}{N_d} \\ |C| = p_d}}C + p_d[\qg{H_d}{N_d}]. $$ 
    Since $H_d \sgp G$, we can lift this to a $G$-relation (\cite{bartel_brauer_2015}, \S 2)
    $$ \theta_d = N_d - \sum_{\substack{N_d < L < H_d\\ \index{L}{N_d} = p_d}}L + p_dH_d, $$
    and compute 
    \begin{equation}
    g(\theta_d) = e_{p_d^2[G:H_d]} - (p_d + 1)e_{p_d[G:H_d]} + p_de_{[G:H_d]}. \label{eqn:gtheta}
    \end{equation}
    
    Notice that \(g(\theta_d)\) is independent of our choice of \(H_d\).
    
    We now show that 
    \begin{equation}\label{geq}
    \rank(g(\ker(f)) ) \geq n
    \end{equation} 
    by proving $\{g(\theta_d) : d \in D\}$ is a linearly independent set. 
    Assume for the sake of contradiction that the left-hand side of the equation
    \begin{equation}\label{eqn:lincombo}
            \sum_{d \in D} \lambda_d g(\theta_d) = 0
    \end{equation} 
    is a nontrivial linear combination. 
    Take the largest $d\in D$ with $\lambda_d \neq 0$ in Equation \eqref{eqn:lincombo}, 
    and observe that if \(d'<d\), then a basis vector \(e_i\) with nonzero coefficient in $g(\theta_{d'})$ must have index \(i\) greater than $\index{G}{H_d}$. 
    Hence, the coefficient of $e_{[G:H_d]}$ in the left-hand side of Equation \eqref{eqn:lincombo} is $\lambda_d p_d$, which is nonzero---a contradiction.

    Next, we construct a subgroup $H_d'$ of $G$ of order $d$, such that for any prime divisor \(p\) of \(d\), if a \(G\)-subgroup of order $d$ has a non-cyclic Sylow $p$-subgroup, then $H_d'$ also has a non-cyclic Sylow $p$-subgroup.
    We will use this subgroup to exhibit a basis of $\ker(f) \cap\ker(g)\otimes \Q$.
    
    Define $\pi(k)$ to be the set of prime divisors of $k$ and let 
    $$ G = \prod_{p\in \pi(\order{G})}G_p $$ 
    be the primary decomposition of $G$. 
    Fix $d \in D$ and let $P_{d}$ to be the set of primes dividing $d$ such that $G$ has a subgroup of order $d$ containing a non-cyclic Sylow $p$-subgroup. 
    For each $p\in\pi(d)$, choose a subgroup $A_p\sgp G_p$ of order $p^{\nu_p(d)}$, taking $A_p$ to be non-cyclic whenever $p\in P_d$. 
    Let 
    $$ H'_d = \prod_{p\in\pi(d)} A_p $$ 
    and note that $H'_d$ is non-cyclic with order $d$.
    
    Let $H \sgp G$ with $\order{H} = d$ and take some prime $p$ such that the Sylow $p$-subgroup of $H$ is non-cyclic, so that the Sylow $p$-subgroup of $H_d'$ is also non-cyclic. 
    There exist subgroups $N \sgp H$ and $N_d' \sgp H_d'$ such that 
    $$ H/N\simeq H_d'/N_d' \simeq C_{p} \oplus C_{p}. $$ 
    Let $\theta_H\tand \theta_{H'_d}$ be the corresponding $G$-relations lifted from $C_{p} \oplus C_{p}$ as before, and define $\delta_H = \theta_H - \theta_{H_d'}$.
    Since $\index{G}{H} = [G:H_d']$, we have by Equation \eqref{eqn:gtheta} that $g(\theta_H) = g(\theta_{H'_d})$, whence $\delta_H \in \ker(f) \cap \ker(g)$. 
    Then if 
    $$ S_d = \{H \sgp G  : H \textrm{ non-cyclic, } \order{H} = d, \tand  H \neq H_d'\}, $$
    one has $\order{S_d} = c_d(G) - 1,$ and $\Delta_d = \{\delta_H : H\in S_d\}$ is a linearly independent set, since $\delta_H$ is the only element in the set with a nonzero coefficient for $H$. 
    
    We now show that the union $\bigcup_{d\in D}\Delta_d$ is a linearly independent set. 
    Assume for the sake of contradiction that we have some nontrivial linear dependence 
    $$ \sum_{d\in D}\sum_{H\in S_d}\lambda_H\delta_H=0. $$
    Pick the largest $d\in D$ for which $\lambda_H \neq 0$ for some $H \in S_d$. 
    Then the only element of $\Delta_d$ for which $H$ has a nonzero coefficient is $\delta_H$. 
    For every $d_1 \in D\backslash \{d\}$, if $\lambda_{H'} \neq 0$ and $H' \in S_{d_1}$, then every term $[G/L]$ in $\delta_{H'}$ satisfies $|L| \leq d_1 < d$. Thus the coefficient of $[G/H]$ in $\delta_{H'}$ must be zero.
    But then 
    $$ \sum_{d\in D}\sum_{H\in S_d}\lambda_H\delta_H \neq 0, $$ 
    a contradiction. Hence, $\bigcup_{d\in D}\Delta_d$ is indeed linearly independent, so
    $$ \rank(\ker(f)\cap\ker(g)) \geq \sum_{d\in D}(c_d(G) - 1). $$
    
    Now, $\rank (\ker(f)) = \sum_{d\in D}c_d(G)$. 
    Applying the rank-nullity theorem to $g|_{\ker(f)}$, we find 
    $$ \rank(\ker(f)) = \rank(\ker(f)\cap \ker(g)) + \rank(g(\ker(f))), $$ 
    whence 
    \begin{equation}\label{leq}
    \rank(g(\ker(f))) \leq \sum_{d\in D}c_d(G) - \sum_{d\in D}(c_d(G) - 1) = |D| = n. 
    \end{equation}
    Together, the inequalities in \eqref{geq} and \eqref{leq} imply $\rank(g(\ker(f))) = n.$
    Hence,
    \begin{align*}
    \rank(\ker(f)\cap\ker(g)) &= \rank(\ker(f)) - \rank(g(\ker(f))) \\
    &= \sum_{d\in D}c_d(G) - n \\
    &= \sum_{d\in D}(c_d(G) -1)  \\
    &= \sum_{d| \order{G}}\max\{c_d(G) - 1,0\}
    \end{align*}
    as desired.
\end{proof}

\begin{corollary} \label{cor:obstruction}
    If there exists a divisor $d$ of $\order{G}$ such that $G$ has more than one non-cyclic subgroup of order $d$, then not every finite $G$-set is uniquely determined by \((a_d)\) and \((M_H)\).
\end{corollary}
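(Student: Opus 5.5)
This follows by combining Theorem \ref{thm:kerrank} with Lemma \ref{lem:kertrivial}. Suppose some divisor $d_0$ of $\order{G}$ satisfies $c_{d_0}(G)\geq 2$. Every summand $\max\{c_d(G)-1,0\}$ in Theorem \ref{thm:kerrank} is nonnegative, and the summand for $d_0$ is at least $1$. Hence
$$ \rank(\ker(f)\cap\ker(g)) \geq c_{d_0}(G)-1 \geq 1, $$
so $\ker(f)\cap\ker(g)$ is a nonzero subgroup of $\Omega G$. By Lemma \ref{lem:kertrivial}, not every finite $G$-set is then uniquely determined by $\left(a_d\right)$ and $\left(M_H\right)$.

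The one point needing care is how Lemma \ref{lem:kertrivial} is applied. A nonzero element $\delta\in\ker(f)\cap\ker(g)$ is a virtual $G$-set, not an actual one, so I would make the counterexample explicit. Write $\delta=\sum_H \beta_H H$ with integer coefficients $\beta_H$, not all zero. Set
$$ X=\sum_{\beta_H>0}\beta_H H \quad\text{and}\quad Y=\sum_{\beta_H<0}(-\beta_H) H. $$
Both are genuine finite $G$-sets. They are non-isomorphic because $\delta\neq 0$ and the transitive $G$-sets form a $\Z$-basis of $\Omega G$. Since $X-Y=\delta$ lies in both kernels, and $f$ and $g$ are additive, we get $f(X)=f(Y)$ and $g(X)=g(Y)$. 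As shown in the proof of Lemma \ref{lem:kertrivial}, this means $M_H^X=M_H^Y$ for all $H\sgp G$ and $a_d^X=a_d^Y$ for all $d\divs\order{G}$.

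For a concrete witness one can bypass the rank formula. Take two distinct non-cyclic subgroups $H\neq H'$ of order $d_0$, and set $\delta=\theta_H-\theta_{H'}$ as in the proof of Theorem \ref{thm:kerrank}. Each $\theta$ is a Brauer relation, so $\delta\in\ker(f)$. Equation \eqref{eqn:gtheta} depends only on the order $d_0$, so $\delta\in\ker(g)$. The coefficient of $H$ in $\delta$ is $p_H\neq 0$, because $H'$ has the same order and so cannot appear among the terms of $\theta_{H'}$ with that order. Hence $\delta\neq 0$.
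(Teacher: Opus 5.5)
Your first two paragraphs are correct and match the paper's proof: Theorem~\ref{thm:kerrank} gives positive rank, and Lemma~\ref{lem:kertrivial} finishes. Splitting a nonzero $\delta$ into its positive and negative parts, to get genuine non-isomorphic $G$-sets, usefully makes explicit a step the paper leaves implicit.

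Your third paragraph, the ``concrete witness,'' is wrong as stated and should be dropped or repaired.

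\textbf{What goes wrong.} Equation~\eqref{eqn:gtheta} does not depend only on the order. With $k=[G:H]$, it reads $g(\theta)=e_{p^2k}-(p+1)e_{pk}+pe_k$, so it also depends on the prime $p$ of the $C_p\oplus C_p$ quotient. Two distinct non-cyclic subgroups of the same order need not have such quotients for a common prime. For example, take $G=C_{36}\oplus C_6$, so $G_2\simeq C_4\oplus C_2$ and $G_3\simeq C_9\oplus C_3$. Let $H=(C_2\oplus C_2)\oplus C_9\simeq C_{18}\oplus C_2$ and $H'=C_4\oplus(C_3\oplus C_3)\simeq C_{12}\oplus C_3$. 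Both have order $36$ and index $6$. However, $H$ has only $C_2\oplus C_2$ quotients and $H'$ has only $C_3\oplus C_3$ quotients. Hence $g(\theta_H)=e_{24}-3e_{12}+2e_6$ while $g(\theta_{H'})=e_{54}-4e_{18}+3e_6$, so $\theta_H-\theta_{H'}\notin\ker(g)$.

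\textbf{How to repair it.} This is why the paper compares every $H$ with the special subgroup $H'_d$. That subgroup has a non-cyclic Sylow $p$-subgroup at every prime $p$ where some subgroup of order $d$ does. So proceed as follows:
\begin{enumerate}
\item Take $H'=H'_{d_0}$ and any other non-cyclic $H$ of order $d_0$.
\item Choose a prime $p$ at which the Sylow $p$-subgroup of $H$ is non-cyclic.
\item Lift both relations from $C_p\oplus C_p$ quotients for that same $p$.
\end{enumerate}
Then $\delta\in\ker(f)\cap\ker(g)$, and your coefficient argument goes through. Phrase it this way: the only term of order $d_0$ in $\theta_{H'}$ is $H'\neq H$, so the coefficient of $H$ in $\delta$ is $p\neq 0$.

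So repaired, this witness uses only the lower-bound half of the paper's proof of Theorem~\ref{thm:kerrank}. It genuinely avoids the rank-nullity step, since the corollary needs just one nonzero kernel element.
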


\begin{proof}
    By Theorem \ref{thm:kerrank}, $\rank(\ker(f) \cap \ker(g)) > 0$, so by Lemma \ref{lem:kertrivial}, there are non-isomorphic $G$-sets \(X_1\) and \(X_2\) such that \(M_H^{X_1}=M_H^{X_2}\) for all \(H\sgp G\) and \(a_d^{X_1}=a_d^{X_2}\) for all \(d\) dividing \(|G|\).
\end{proof}

\section{Application to $L$-Function Factorization Rigidity}\label{sec:conclusions}

We conclude by noting that Theorem \ref{thm:main} elucidates a certain rigidity property of Artin \(L\)-function factorization.
\begin{theorem}
    Let \(E\) be a finite abelian extension of the rational numbers \(\Q\), and let \(G=\Gal(E/\Q)\) be the Galois group. 
    If \(G\) is cyclic or the direct sum of a cyclic group and a prime-order group, then for subfields \(F_1,\ldots,F_n,K_1,\ldots,K_n\) of \(E\) such that \begin{enumerate}
        \item\label{cond:equaldeg} \([F_i:\Q]=[K_i:\Q]\) for each \(i\)
        \item\label{cond:equalprod} \(\prod\limits_{i=1}^n\zeta_{F_i}(s)=\prod\limits_{i=1}^n\zeta_{K_i}(s)\),
    \end{enumerate}
    we have, after possibly reindexing, equality \(F_i=K_i\) for each \(i\).  Otherwise, there are subfields \(F_1,\ldots,F_n,K_1,\ldots,K_n\) of \(E\) satisfying Conditions \eqref{cond:equaldeg} and \eqref{cond:equalprod}, yet \(\{F_1,\ldots,F_n\}\neq\{K_1,\ldots,K_n\}\) as sets.
\end{theorem}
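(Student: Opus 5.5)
The plan is to translate the field-theoretic statement into a statement about $G$-sets via Galois correspondence, and then invoke Theorem \ref{thm:main} in both directions. To each subfield $F\subseteq E$ attach the subgroup $H_F=\Gal(E/F)$ and the transitive $G$-set $G/H_F$. Then $[F:\Q]=[G:H_F]$. By the standard Artin formalism, $\zeta_F(s)=L(s,\Q[G/H_F])$, the Artin $L$-function of the permutation representation $\operatorname{Ind}_{H_F}^G 1$. Set
$$X_1=\bigsqcup_i G/H_{F_i},\qquad X_2=\bigsqcup_i G/H_{K_i}.$$
Condition \eqref{cond:equalprod} then reads $L(s,\Q[X_1])=L(s,\Q[X_2])$.

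The first key step is to show this forces $\Q[X_1]\simeq\Q[X_2]$ as rational representations, i.e.\ $f(X_1)=f(X_2)$. Since $G$ is abelian, each $\Q[X_j]$ decomposes into irreducible rational representations, and each of these splits into Galois orbits of Dirichlet characters. Equality of the $L$-function products is an equality of products of Dirichlet $L$-functions. Linear independence of Artin $L$-functions of distinct irreducible characters then shows the multiplicities of every character agree. For the abelian case one can also argue via Euler factors at unramified primes, using Chebotarev to see that every element of $G$ occurs as a Frobenius. The converse is immediate: $f(X_1)=f(X_2)$ implies equal products.

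The second step is bookkeeping. By the first step, $f(X_1)=f(X_2)$, which is equivalent to $M_H^{X_1}=M_H^{X_2}$ for all $H$, as noted in Lemma \ref{lem:kertrivial}. Condition \eqref{cond:equaldeg} gives $a_d^{X_1}=a_d^{X_2}$, since both sides count the indices $i$ with $[G:H]=d$; indeed pairwise equality of degrees is more than needed. If $G$ is cyclic or $C_n\oplus C_p$, Theorem \ref{thm:main} gives $X_1\simeq X_2$. Since orbits $G/H$ determine $H$ in the abelian case, the multisets $\{H_{F_i}\}$ and $\{H_{K_i}\}$ coincide. Hence the multisets of fields coincide, and after reindexing $F_i=K_i$.

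For the converse direction, suppose $G$ is of neither form. By Corollary \ref{cor:obstruction} and Lemma \ref{lem:kertrivial}, there is a nonzero $\delta\in\ker f\cap\ker g$. Write $\delta=X_1-X_2$ with $X_1,X_2$ genuine $G$-sets sharing no common orbit; this is possible by cancelling common summands. Then $X_1\neq X_2$, so the underlying sets of subgroups, and hence of fields, differ. Since $g(\delta)=0$, the numbers of orbits of each index agree. In particular $X_1$ and $X_2$ have the same total number $n$ of orbits, and we may pair orbits of equal index to obtain Condition \eqref{cond:equaldeg}. Since $f(\delta)=0$, the $L$-function identity gives Condition \eqref{cond:equalprod}. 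Because no orbit type is common, $\{F_i\}\cap\{K_i\}=\emptyset$, so the sets are unequal.

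The main obstacle is the first step: deducing equality of representations from equality of zeta products. I would handle it by citing the linear independence of Artin $L$-functions, which is classical for abelian characters via Dirichlet $L$-functions and the characterization of characters through the Frobenius densities of unramified primes.
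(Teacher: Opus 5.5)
Your proof is correct. The forward direction follows the paper's argument:
\begin{itemize}
\item build $X_1,X_2$;
\item deduce $f(X_1)=f(X_2)$ from the zeta identity and $g(X_1)=g(X_2)$ from the degrees;
\item apply Theorem \ref{thm:main}.
\end{itemize}
You also justify the step $L(\rho_1,s)=L(\rho_2,s)\Rightarrow\rho_1\simeq\rho_2$ via Frobenius traces at unramified primes together with Chebotarev (or Dirichlet). The paper simply asserts this step.

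The converse is where your route differs.
\begin{itemize}
\item The paper fixes two distinct subgroups $H_1,H_2$ of the same order, both with non-cyclic Sylow $p$-subgroups. It forms the explicit element $\theta_{H_1}-\theta_{H_2}$ from lifted $C_p\oplus C_p$ relations. It then checks that $H_1$ does not occur in $\theta_{H_2}$ and vice versa, so that $E^{H_1}$ and $E^{H_2}$ survive any cancellation and witness the inequality of the two sets.
\item You take an arbitrary nonzero $\delta\in\ker f\cap\ker g$ and split it into its positive and negative parts. This removes the cancellation bookkeeping. It also gives the stronger conclusion that $\{F_i\}$ and $\{K_i\}$ are disjoint; they are nonempty because $g(\delta)=0$ and $\delta\neq 0$ force $n\geq 1$.
\end{itemize}
What your version gives up is explicitness: the paper's construction names concrete fields arising from the $C_p\oplus C_p$ quotients, while yours only asserts existence.

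One small citation point. Corollary \ref{cor:obstruction} assumes a divisor $d$ with two non-cyclic subgroups of order $d$. To apply it when $G$ is neither cyclic nor $C_n\oplus C_p$, you also need Proposition \ref{prop:noncyclic}. Alternatively, cite the ``only if'' half of Theorem \ref{thm:main} together with Lemma \ref{lem:kertrivial}.
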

\begin{proof}
    Let \(X_1\) be the disjoint union of the transitive \(G\)-sets \(G/G_{F_1},\ldots,G/G_{F_n}\), and let \(\rho_1\) be the complexification of \(f(X_1)\), with \(f\) as in Equation \eqref{eqn:bside}.  
    Similarly, let \(X_2\) be the disjoint union of the \(G/G_{K_i}\), and let \(\rho_2\) be the complexification of \(f(X_2)\).  
    We then have equalities \(L(\rho_1,s)=\prod\zeta_{F_i}(s)\) and \(L(\rho_2,s)=\prod\zeta_{K_i}(s)\) of Artin \(L\)-functions. 
    Condition \eqref{cond:equalprod} amounts to an equality
    $
    L(\rho_1,s)=L(\rho_2,s),
    $
    so that \(\rho_1\simeq\rho_2\), and therefore \(|H\backslash X_1|=|H\backslash X_2|\) for every subgroup \(H\) of \(G\). Furthermore, by elementary Galois theory, Condition \eqref{cond:equaldeg} in the theorem statement amounts to \(X_1\) and \(X_2\) having the same number of \(G\)-orbits of a given size.  Hence, by Theorem \ref{thm:main}, if \(G\) is cyclic or the direct sum of a cyclic group and a prime-order group, we conclude \(X_1\simeq X_2\) as \(G\)-sets.  In particular, after relabeling if necessary, we see that \(F_i=K_i\) for each \(i\).  Conversely, if \(G\) is neither cyclic nor the direct sum of a cyclic group and a prime-order group, then as observed in the proof of Proposition \ref{prop:noncyclic}, there is a divisor \(d\) of \(|G|\) and a prime \(p\) dividing \(d\) such that \(G\) has distinct subgroups \(H_1\) and \(H_2\) of order \(d\), both of which have non-cyclic Sylow \(p\)-subgroups.  By setting \(K_1=E^{H_1}\) and \(F_1=E^{H_2}\) and lifting the respective Brauer relations (as in the proof of Theorem \ref{thm:kerrank}) on quotients of \(H_1\) and \(H_2\) isomorphic to \(C_p\oplus C_p\), we produce a family \(F_1,\ldots,F_n,K_1,\ldots,K_n\) of number fields satisfying Conditions \eqref{cond:equaldeg} and \eqref{cond:equalprod}, such that \(K_1\not\in\{F_i\}\) and \(F_1\not\in\{K_i\}\). In particular, there is no \(H_1\) term in \(\theta_{H_2}\), and there is no \(H_2\) term in \(\theta_{H_1}\), yet
    $$g(\theta_{H_1})-g(\theta_{H_2})=0$$
    and
    $$f(\theta_{H_1})-f(\theta_{H_2})=0,$$
    and terms in the last relation may be rearranged to give an equality of permutation representations.
\end{proof}
\bibliography{main_park}
\bibliographystyle{alpha}

\end{document}